    \numberwithin{equation}{section}
 \bmdefine\alphab{\mathbf{\alpha}}
\bmdefine\betab{\mathbf{\beta}}
\bmdefine\pib{\mathbf{\pi}}
\bmdefine\xib{\mathbf{\xi}}
\bmdefine\sigmab{\mathbf{\sigma}}
\newcommand{\comment}[1]{}
\newcommand{\eq}{\begin{equation}}
\newcommand{\en}{\end{equation}}
\newcommand{\zz}{\mathbb{Z}}
\newcommand{\rr}{\mathbb{R}}
\newcommand{\pp}{\mathbb{P}}
\newcommand{\qq}{\mathbb{Q}}
\newcommand{\nn}{\mathbb{N}}
\newcommand{\ev}{\mathbb E}
\newcommand{\ep}{\hfill $\Box$}
\begin{document}

\theoremstyle{plain}
\newtheorem{thm}{Theorem}
\newtheorem{rem}{Remark}
\newtheorem{lemma}[thm]{Lemma}
\newtheorem{prop}[thm]{Proposition}
\newtheorem{cor}[thm]{Corollary}

\theoremstyle{definition}
\newtheorem{defn}{Definition}
\newtheorem{cond}{Condition}
\newtheorem{asmp}{Assumption}
\newtheorem{notn}{Notation}
\newtheorem{prb}{Problem}

\theoremstyle{remark}
\newtheorem{rmk}{Remark}
\newtheorem{exm}{Example}
\newtheorem{clm}{Claim}

\title[Time-reversal]{Time-reversal of reflected Brownian motions in the orthant}

\author{Mykhaylo Shkolnikov and Ioannis Karatzas}
\address{University of California, Berkeley\\ Department of Statistics\\ Berkeley, CA 94720}
\email{mshkolni@gmail.com}
\address{INTECH Investment Management \\ One Palmer Square\\ Princeton, NJ 08542    and   Columbia University \\ Department of Mathematics\\ New York, NY 10027}
\email{ik@enhanced.com, ik@math.columbia.edu}

\thanks{Research partially supported by NSF grants DMS-09-05754 (I. Karatzas).}

\keywords{Reflected Brownian motions in the orthant, time reversal, duality, discrete approximation, Skorokhod decomposition, oscillation inequality}

\subjclass[2000]{60H10, 60F17, 60G10, 60J27}

\date{July 16, 2013}

\begin{abstract}
We determine the processes obtained from a large class of reflected Brownian motions (RBMs) in the nonnegative orthant by means of time reversal. The class of RBMs we deal with includes, but is not limited to, RBMs in the so-called \textsc{Harrison-Reiman} class \cite{HR} having diagonal covariance matrices. For such RBMs our main result resolves the long-standing open problem of determining the time reversal of RBMs beyond the skew-symmetric case treated by \textsc{R.J.$\,$Williams} in \cite{Wi}. In general, the time-reversed process itself is no longer a RBM, but its distribution is absolutely continuous with respect to a certain auxiliary RBM. In the course of the proof we introduce a novel discrete approximation scheme for the class of RBMs described above, and use it to determine the semigroups dual to the semigroups of such RBMs.
\end{abstract}

\maketitle

\section{Introduction}
\label{sec1}

Let $X(\cdot)$ be a reflected Brownian motion (RBM) in the nonnegative orthant $\rr_+^d$ with data $(\mathfrak{b},\mathfrak{A},\mathfrak{R})$, for some vector $\mathfrak{b}\in\rr^d$, a strictly positive-definite matrix $\mathfrak{A}=  (\mathfrak{a}_{i,j})_{1 \le i, j \le d}\,\in\rr^{d\times d}$, and a matrix $\mathfrak{R} = (\mathfrak{r}_{i,j})_{1 \le i, j \le d}\,\in\rr^{d\times d}$. In other words, $X(\cdot)$ is a continuous semimartingale of the form
\eq
\label{1}
X(t)=X(0)+\mathfrak{b}\,t+W(t)+\mathfrak{R}\,L(t), \quad  0 \le t < \infty 
\en  
on the canonical probability space $\,(C([0,\infty),\rr_+^d),\mathfrak{F}_\infty,\pp)\,$ endowed with the filtration $\,\mathbb{F} = \big\{\mathfrak{F}_t\big\}_{0 \le t < \infty}\,$ generated by the projection mappings. Here the process $W(\cdot)$ is a Brownian motion  with covariance matrix $\mathfrak{A}\,$, and independent of $X(0)$; and $\,L(\cdot) = \big(L_1(\cdot),\ldots,L_d(\cdot)\big)$ is the vector of local-time-like ``regulating processes" accumulated by $X(\cdot)$ on the respective faces of the boundary $\partial\rr_+^d$ of $\rr_+^d\,$. These processes are continuous, nondecreasing and adapted, and satisfy   
\eq
\label{flatoff}
L_i (0)=0 \qquad \text{and} \qquad  \int_0^\infty \mathbf{1}_{ \{ X_i(t) >0\} }\, \mathrm{d} L_i(t)=0
\en
a.s., for every $\,i=1,\ldots,d\,$. The columns $\,\overline{\mathfrak{r}}_j = ( \mathfrak{r}_{1,j}, \cdots, \mathfrak{r}_{d,j})^\prime\,$, $j=1,\ldots,d\,$ of the reflection matrix  $\,\mathfrak{R}\,$ provide the directions of reflection on the respective faces of the nonnegative orthant $\rr_+^d\,$;  here and in the sequel the symbol $'$ denotes   transposition. 

\smallskip
It was shown in \cite{TW} that a weak solution to \eqref{1} exists and is unique in distribution, if and only if the reflection matrix $\mathfrak{R}$ satisfies the so-called {\it completely-$\mathit{S}$} condition. The latter postulates that there exist  a vector $\,\lambda\in\rr_+^d\,$ such that $\,\mathfrak{R}\,\lambda>0\,$ holds componentwise, and that the same be true for all principal submatrices of $\mathfrak{R}$. We recall from Theorem 1.3 in \cite{TW} that in this case $X(\cdot)$ is a strong \textsc{Markov} process, whose semigroup $(P_t)_{t\ge0}$ is \textsc{Feller} continuous. The completely-$\mathit{S}$ condition is satisfied, in particular, by reflection matrices of the \textsc{Harrison-Reiman} \cite{HR} type $\, \mathfrak{R}= \mathfrak{I}_d - \mathfrak{Q}\,$, where $\, \mathfrak{I}_d\,$ is the identity matrix and $\,\mathfrak{Q}\,$ is a $\,(d\times d)\,$ matrix with zeros on its diagonal and spectral radius strictly less than 1; in this case the resulting RBM is given by the image of a deterministic map applied to the driving Brownian motion, and the solution of (\ref{1}) is thus strong and pathwise unique. 

\subsection{Time-Reversal} \label{sec1.0}

In contrast to the case of a Brownian motion and, more generally, of diffusion processes, very little is known about the behavior of RBMs in the orthant under time reversal. As in the case of diffusion processes (cf. \cite{HP}, \cite{M}), one expects the appearance of a logarithmic gradient of the marginal density of $X(\cdot)$ in the drift of the time-reversed process, and part of the difficulty in the analysis is the lack of knowledge about the regularity of such densities when one approaches the boundary of the orthant (the usual regularity results for partial differential equations with oblique derivative boundary conditions do not apply, due to the non-smoothness of the domain). A notable exception is the so-called {\it skew-symmetric case} studied by \textsc{Harrison} \& \textsc{Williams} \cite{HW2} and \textsc{Williams} \cite{Wi3}, in which a certain compatibility relation between $\,\mathfrak{A}\,$ and $\,\mathfrak{R}\,$ (see \eqref{sksy} below) guarantees that the invariant distribution of the RBM is a product of exponential distributions. In this case, the time-reversal of the stationary RBM was determined in \cite{Wi} and was shown to be given by yet another RBM. This is not surprising, as the two RBMs have been known to be in duality with each other (cf. \cite{Wi3}). In comparison with the diffusion case, the main novelty here is the appearance of the dual reflection matrix $\,\mathfrak{R}^*= (\mathfrak{r}_{i,j}^*)_{1 \le i, j \le d}\,$, obtained from $\mathfrak{R}$ by reflecting its columns with respect to the respective inward unit normal vectors on the faces of $\rr_+^d$.  

\smallskip
The results of \cite{Wi} and \cite{Wi3} have raised the questions of determining the time reversal and the dual process for RBMs beyond the skew-symmetric case. Our Theorems \ref{main_thm} and \ref{dual} resolve these long-standing open problems under the following assumption. 

\begin{asmp}\label{main_asmp}
The matrix $\,\mathfrak{A}\,$ satisfies $\,\mathfrak{a}_{i,i}>\sum_{j\neq i} |\mathfrak{a}_{i,j}|$, $j=1,\ldots,d\,$; the matrix $\mathfrak{R}$ is invertible; $\mathfrak{R}^{-1}\,\mathfrak{b}<0$ holds componentwise; and the row sums of the matrices $\mathfrak{R}\,$, $\mathfrak{R}^*$ and of all their principal submatrices are positive. 
\end{asmp}

\smallskip

With $\,\mathfrak{A}\,$ strictly positive-definite and $\,\mathfrak{R}\,$ invertible, the condition $\mathfrak{R}^{-1}\,\mathfrak{b}<0$ is necessary and sufficient for the existence of an invariant probability measure ${\bm \nu}$ for $X$ (see Theorem (2) in \cite{HW}), which is then known to have an infinitely differentiable density function $\,p:\;\rr^d_{++} \rightarrow(0,\infty)\,$ on the interior $\rr^d_{++}$ of $\rr^d_+$ (by \textsc{Weyl}'s Lemma; see \cite{Wi2} for more details). The condition on the row sums of $\mathfrak{R}\,$, $\mathfrak{R}^*$ and all their principal submatrices is somewhat stronger than the completely-$\mathit{S}$ condition, in that it imposes that the vector $\lambda$ in the definition of the completely-$\mathit{S}$ condition can be chosen as the vector $(1,\ldots,1)'\in\rr_+^d$,  and that the corresponding restriction be true for all principal submatrices as well. Note, however, that we do not impose any relation  between the matrices $\,\mathfrak{A}\,$ and $\,\mathfrak{R}\,$, so that $X(\cdot)$ need not   be skew-symmetric.  

\smallskip
The main result of our paper determines the time reversal of stationary RBMs under Assumption \ref{main_asmp}. In particular,  we note that every RBM in the \textsc{Harrison}-\textsc{Reiman} class of \cite{HR} with a diagonal covariance matrix can be turned into a RBM as in Assumption \ref{main_asmp} by a suitable rescaling of the coordinates;  see the first paragraph in the proof of Theorem 1 in \cite{HR}. Thus   we are able to determine the time-reversal of stationary RBMs in the \textsc{Harrison}-\textsc{Reiman} class. It is also easy to see that Assumption \ref{main_asmp} allows  for RBMs which are not given by a rescaling of coordinates of a RBM in the \textsc{Harrison}-\textsc{Reiman} class. For example, one can choose $\mathfrak{R}$ as a $2\times2$ matrix, whose bottom-left entry is negative whereas all other entries are positive. 

\subsection{The Main Result} \label{sec1.1}

To set the stage for our main result, we consider the auxilliary RBM $\,\tilde{X} (\cdot)\,$ given by
\eq\label{2}
\tilde{X}(t)=\tilde{X}(0)-\mathfrak{b}\,t + B(t) + \mathfrak{R}^*\,\tilde{L}(t),\quad 0 \le t < \infty
\en
on a copy $\,\big(C([0,\infty),\rr_+^d), \mathfrak{F}_\infty,\tilde{\pp}\big)$, $\,\mathbb{F} = \big\{\mathfrak{ F}_t\big\}_{0 \le t < \infty}\,$ of the canonical filtered probability space above. Here $B(\cdot)$ is a Brownian motion with the same distribution as $W(\cdot)$, the matrix $\mathfrak{R}^*$ is the dual reflection matrix defined in the previous subsection, and $\tilde{L}(\cdot)=\big(\tilde{L}_1(\cdot),\ldots,\tilde{L}_d (\cdot)\big)'$ is the vector of local time processes accumulated by $\tilde{X}(\cdot)$ on the faces of $\rr_+^d$. We fix a time-horizon $T\in(0,\infty)$, and introduce a new measure $\mathbb{Q}\,$ on $\,\mathfrak{F} (T)\,$ by 
\begin{equation}
\label{Q}
\mathbb{Q} (A)\,:=\ev^{\widetilde{\pp}}\bigg[\exp\Big(-2\,\sum_{j=1}^d \,\frac{\mathfrak{b}_j\,\mathfrak{r}_{j,j}}{\mathfrak{a}_{j,j}}\,\tilde{L}_j (T)\Big)\,\,
\frac{\,p(\tilde{X}(T))\,}{p(\tilde{X}(0))}\,\,\mathbf{1}_A \,\bigg].
\end{equation}
As we show below, $\,\mathbb{Q}\,$ is in fact a probability measure; this is a consequence of an appropriate duality relation between the processes $\,\tilde{X}(\cdot)\,$ and $\,X(\cdot)\,$  (see the paragraph following (\ref{mainiden})). 

\medskip

Throughout the paper  we shall work under Assumption \ref{main_asmp}, and let the random variables $\,X(0)\,$ of \eqref{1} and $\,\tilde{X}(0)\,$ of \eqref{2} be distributed according to $\,{\bm \nu}\,$, so that $\,X(\cdot)\,$ is a stationary process. We are now ready to state our main result.  
 
\begin{thm}
\label{main_thm}
The measure $\mathbb{Q}$ of \eqref{Q} is a probability measure; under it, the 
distribution of $\,\tilde{X}(t),\,0\le t\leq T$   is the same as the distribution of $\,X(T-t), \, 0 \le t \le T\,$ under the original measure $\mathbb{P}\,$. 

Moreover, if the probability density function  $\,p\,$ is twice continuously differentiable and strictly positive throughout $\,\rr_+^d\,$, then the process 
\eq\label{expmart}
Z(t) := 
\exp\Big(\int_0^t \big(\nabla\log p)(\tilde{X}(s)\big)' \mathrm{d}B(s)-\frac{1}{2}\int_0^t \big((\nabla\log p)'\, \mathfrak{A}\,(\nabla\log p)\big)(\tilde{X}(s))\,\mathrm{d}s\Big),\;\;\; 0 \le t \le T
\en
is well-defined and a $\,\tilde{\pp}$-martingale; whereas $\,\tilde{X}(t),\, 0 \le t \le T\,$ is a continuous $\,\mathbb{Q}$-semimartingale   with  decomposition
\eq\label{semimart}
\tilde{X}(t)=\tilde{X}(0)-\mathfrak{b}\,t+\int_0^t \mathfrak{A}^{1/2}\,(\nabla\log p)(Y(s))\,\mathrm{d}s+{\bm\beta}(t)+\mathfrak{R}^*\,\tilde{L}(t)\,,\qquad 0 \le t \le T  
\en
and  $\,{\bm \beta} (\cdot)$   a $\mathbb{Q}-$Brownian motion with diffusion matrix $\,\mathfrak{A}\,$.  
\end{thm}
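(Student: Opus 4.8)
The plan is to derive everything from the duality relation \eqref{mainiden} between the transition semigroup $(P_t)$ of $X(\cdot)$ and the exponentially weighted semigroup of $\tilde X(\cdot)$ furnished by the discrete approximation scheme. Setting $\hat P_t g(x):=p(x)^{-1}\,\ev^{\tilde\pp}_x\big[\exp\big(-2\sum_{j=1}^d\tfrac{\mathfrak b_j\,\mathfrak r_{j,j}}{\mathfrak a_{j,j}}\,\tilde L_j(t)\big)\,p(\tilde X(t))\,g(\tilde X(t))\big]$, that relation reads $\langle P_t f,g\rangle_{\bm\nu}=\langle f,\hat P_t g\rangle_{\bm\nu}$ with $\bm\nu(\mathrm dx)=p(x)\,\mathrm dx$. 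Taking $f\equiv g\equiv\one$ and using $P_T\one=\one$, $\bm\nu(\rr_+^d)=1$, together with $\tilde X(0)\sim\bm\nu$, one gets $\ev^{\tilde\pp}\big[\exp(-2\sum_j\tfrac{\mathfrak b_j\mathfrak r_{j,j}}{\mathfrak a_{j,j}}\tilde L_j(T))\,p(\tilde X(T))/p(\tilde X(0))\big]=1$; since the integrand is nonnegative, this is exactly the assertion that $\qq$ is a probability measure.

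For the distributional identity I would check equality of finite-dimensional distributions. Fix $0=t_0<\cdots<t_n=T$ and bounded measurable $f_0,\ldots,f_n$, and expand $\ev^{\qq}\big[\prod_{k=0}^n f_k(\tilde X(T-t_k))\big]$ as a $\tilde\pp$-expectation via \eqref{Q}. Because the coordinates of $\tilde L(\cdot)$ are nondecreasing, the exponential weight factors multiplicatively over the subdivision $\{T-t_k\}_k$, while $p(\tilde X(T))/p(\tilde X(0))$ telescopes along the same subdivision; conditioning from the last interval backward and using the Markov property of $\tilde X(\cdot)$ under $\tilde\pp$ (and the fact that local-time increments over $[s,s']$ are, given $\mathfrak F(s)$, functionals of the shifted path) rewrites the expectation as the nested quantity $\langle f_n,\hat P_{u_1}(f_{n-1}\hat P_{u_2}(f_{n-2}\cdots\hat P_{u_n}f_0))\rangle_{\bm\nu}$ with $u_k=t_{n-k+1}-t_{n-k}$. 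Repeated use of $\langle\phi,\hat P_t\psi\rangle_{\bm\nu}=\langle P_t\phi,\psi\rangle_{\bm\nu}$ turns this into $\ev^{\pp}\big[\prod_{k=0}^n f_k(X(t_k))\big]$. Since both $\tilde X(\cdot)$ and $X(\cdot)$ have continuous paths, equality of finite-dimensional distributions upgrades to equality of laws on $C([0,T],\rr_+^d)$.

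For the remaining assertions, assume in addition $p\in C^2(\rr_+^d)$ with $p>0$ on all of $\rr_+^d$, so that $\nabla\log p$ is continuous on $\rr_+^d$ and $Z(\cdot)$ of \eqref{expmart} is a well-defined continuous local martingale (a stochastic exponential). The first step is Itô's formula applied to $\log p(\tilde X(t))$ along \eqref{2}: writing $\nabla^2\log p=p^{-1}\nabla^2 p-(\nabla\log p)(\nabla\log p)'$ and invoking the interior Fokker--Planck equation $\tfrac12\,\mathrm{tr}(\mathfrak A\,\nabla^2 p)=\mathfrak b'\nabla p$ for the invariant density, the first-order contributions cancel and one is left with $\int_0^t(\nabla\log p)(\tilde X(s))'\,\mathrm dB(s)$, the correction $-\tfrac12\int_0^t((\nabla\log p)'\mathfrak A(\nabla\log p))(\tilde X(s))\,\mathrm ds$, and a boundary term $\int_0^t(\nabla\log p)(\tilde X(s))'\,\mathfrak R^*\,\mathrm d\tilde L(s)$. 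The key PDE input is the boundary condition that the basic adjoint relationship imposes on $p$ on each face $\{x_j=0\}$: under the extra regularity it becomes a pointwise identity which, once the reflection of the columns of $\mathfrak R$ against the inward unit normals in the definition of $\mathfrak R^*$ is unwound, evaluates that boundary term to $2\sum_{j=1}^d\tfrac{\mathfrak b_j\,\mathfrak r_{j,j}}{\mathfrak a_{j,j}}\,\tilde L_j(t)$. Exponentiating gives $Z(t)=\exp(-2\sum_j\tfrac{\mathfrak b_j\mathfrak r_{j,j}}{\mathfrak a_{j,j}}\tilde L_j(t))\,p(\tilde X(t))/p(\tilde X(0))$; since $Z(\cdot)$ is a nonnegative $\tilde\pp$-local martingale with $\ev^{\tilde\pp}[Z(T)]=1$ by the first paragraph, it is a true $\tilde\pp$-martingale and coincides with the density process $\mathrm d\qq/\mathrm d\tilde\pp\big|_{\mathfrak F(t)}$. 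Finally, Girsanov's theorem applied to $Z(\cdot)=\mathcal E\big(\int_0^\cdot(\nabla\log p)(\tilde X(s))'\,\mathrm dB(s)\big)$ shows that $\bm\beta(\cdot):=B(\cdot)-\int_0^\cdot\mathfrak A\,(\nabla\log p)(\tilde X(s))\,\mathrm ds$ is a $\qq$-Brownian motion with diffusion matrix $\mathfrak A$; substituting into \eqref{2}, and noting that the defining properties of the Skorokhod regulator $\tilde L(\cdot)$ are preserved under $\qq\ll\tilde\pp$, yields the decomposition \eqref{semimart}.

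The main obstacle is the duality relation \eqref{mainiden} itself, which is the genuine engine of the proof: establishing it through the discrete scheme requires constructing reflected random walks that approximate $X(\cdot)$ and $\tilde X(\cdot)$ and for which an \emph{exact} discrete duality holds with respect to a $p$-weighted, local-time-discounted reference measure, and then passing to the diffusion limit with uniform control on the regulating processes --- this is where the Skorokhod decomposition and the oscillation inequality enter. Within the proof of Theorem \ref{main_thm} proper, the delicate point is the third step above: justifying Itô's formula for $\log p(\tilde X)$ up to the boundary under the stated regularity, and correctly translating the basic adjoint relationship into the pointwise boundary identity that produces precisely the coefficients $-2\,\mathfrak b_j\,\mathfrak r_{j,j}/\mathfrak a_{j,j}$ appearing in \eqref{Q}.
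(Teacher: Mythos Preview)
Your proposal is correct and follows essentially the same route as the paper: deduce the finite-dimensional identity \eqref{mainiden} by repeated application of the semigroup duality (Theorem~\ref{dual}), infer that $\qq$ is a probability measure by monotone approximation, and under the extra regularity apply It\^o's formula together with the interior and boundary pieces of the basic adjoint relationship to identify the density process with $Z(\cdot)$, concluding via Girsanov. The only differences are cosmetic: you package the duality with respect to $\bm\nu$ (absorbing $p$ into $\hat P_t$) rather than Lebesgue measure, and your Girsanov drift $\mathfrak A\,(\nabla\log p)$ is in fact the correct one --- the $\mathfrak A^{1/2}$ (and the stray $Y$) in \eqref{semimart} are typos in the paper.
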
  

\begin{rem} 
{\rm 
When the covariance matrix $\mathfrak{A}$ and the reflection matrix $\mathfrak{R} $ satisfy the skew-symmetry condition
\eq\label{sksy}
2\, \mathfrak{A}\,=\, \mathfrak{R}\,\mathfrak{D} + \mathfrak{D}\,\mathfrak{R}'\,,
\en
where $\,\mathfrak{D}\,$ is the diagonal matrix with the same diagonal entries as $\,\mathfrak{A}\,$, it was shown by \textsc{Harrison \& Williams} \cite{HW2} that the invariant probability density function is given by a product of exponentials:
\eq
p(x)\,=\,C\,\exp \big( - \eta' \,  x \big) \,, \quad x \in \rr_+^d\,, \qquad \text{with} \quad \eta \,=\, -2 \,\big( \mathfrak{R}\, \mathfrak{D} \big)^{-1} \,\mathfrak{b}\,. 
\en
In this case the dynamics of \eqref{semimart} for the time-reversal of $\,X(\cdot)\,$ are again those of a RBM in the orthant with drift $\,-\mathfrak{b}-2\,\mathfrak{A}^{1/2}\big(\mathfrak{R}\,\mathfrak{D})^{-1}\,\mathfrak{b}\,$, the same covariance matrix $\,\mathfrak{A}\,$, and reflection matrix $\,\mathfrak{R}^*\,$. This recovers the result of \cite{Wi}.
}
\end{rem}

\subsection{Preview} 
\label{sec1.3}

The rest of the paper is devoted to the proof of Theorem \ref{main_thm}. This  proof is broken down into three main steps, which are carried out in sections \ref{sec2}, \ref{sec3} and \ref{sec4}, respectively. The main ideas of the proof are the following. 

First, we introduce continuous-time \textsc{Markov} chains with discrete state-spaces approximating the RBM $\,X(\cdot)\,$ of Theorem \ref{main_thm}, and prove that they converge -- together with several important observables -- to their continuum analogues (section \ref{sec2}). In section \ref{sec3} we determine the dual processes of the approximating \textsc{Markov} chains and, passing to a suitable scaling limit, obtain an appropriate duality relation between $\,X(\cdot)\,$ and $\,\tilde{X}(\cdot)\,$. The convergence result (Theorem \ref{conv}) and the duality result (Theorem \ref{dual}) are of interest in their own right. Lastly, in section \ref{sec4} we use the duality relation to show that the finite-dimensional distributions of the time-reversal of $\,X(t),\,0\le t \le T\,$ under the original measure $\mathbb{P}\,$, and of the process $\,\tilde{X}(t), \, 0 \le t \le T\,$ under the new measure $\,\qq\,$, are the same.

\smallskip
One of the main ingredients in the proof of the main result is a construction of continuous-time  \textsc{Markov} chains with discrete state-spaces, approximating a RBM as in Assumption \ref{main_asmp}. This  is of interest in its own right and, to the best of our knowledge,   the first such construction which is able to handle RBMs beyond the \textsc{Harrison-Reiman} class. For different kinds of approximations of RBMs in the \textsc{Harrison-Reiman} class we refer the reader to \cite{R}, \cite{BW} and \cite{KPS}. The reason we have to work under Assumption \ref{main_asmp}, rather than allowing $\,\mathfrak{A}\,$ to be an arbitrary symmetric positive definite matrix and
$\,\mathfrak{R}\,$ to be completely-$\mathit{S}$, is that we were not able to construct a sequence of approximating \textsc{Markov} chains in this generality. 

In every other respect  our argument is completely general, and should be able to handle the time-reversal of a generic RBM in a generic domain  once the appropriate approximation theory and the weak uniqueness of the limiting RBM have been established. A particularly interesting such case would be that of a RBM in the orthant with   degenerate covariance matrix $\,\mathfrak{A}\,$ satisfying $\,\mathfrak{a}_{j,j}>0$, $j=1,\ldots,d\,$; in this setting it is not even clear under which conditions on $\,\mathfrak{b}\,$ and $\,\mathfrak{R}\,$ the change of measure \eqref{Q} can be made sense of. 

\section{Discrete approximation processes} \label{sec2}

The starting point of our approach is the definition of the discrete approximation processes $\,X^n(\cdot)$, $n\in\nn\,$ announced in the introduction. 

To prepare the ground for the construction of the approximating chains, we first let $\,S^n (\cdot)$, $n\in\nn\,$ be a sequence of continuous-time \textsc{Markov} chains on the lattices $\,\frac{1}{\sqrt{n}}\,\zz^d$, $n\in\nn\,$ whose jump rates are homogeneous in space and time. The jumps of these \textsc{Markov}  chains are of the forms 
$$\pm e_i/\sqrt{n} \qquad \text{ and} \qquad \pm e_i/\sqrt{n}\,\pm e_j/\sqrt{n}\,,$$ where $\,e_1,\ldots,e_d\,$ denote the standard basis vectors in $\,\rr^d\,$. The corresponding jump rates are defined as
\begin{eqnarray}
&&n\,\mathfrak{a}_{i,i}/\zeta_i:=n\,\Big(\mathfrak{a}_{i,i}-\sum_{j\neq i} |\mathfrak{a}_{i,j}|\Big)/2\;\;\;\mathrm{for\;jumps\;by}\;\;\;e_i/\sqrt{n}\,, \label{rate1}\\
&&n\,\mathfrak{a}_{i,i}/\zeta_i-\sqrt{n}\,\mathfrak{b}_i\;\;\;\mathrm{for\;jumps\;by}\;\;\; -e_i/\sqrt{n}\,, \label{rate2}\\
&&n\,(\mathfrak{a}_{i,j})_+/2\;\;\;\mathrm{for\;jumps\;by}\;\;\;\pm(e_i/\sqrt{n}+e_j/\sqrt{n})\,, \label{rate3}\\
&&n\,(\mathfrak{a}_{i,j})_-/2\;\;\;\mathrm{for\;jumps\;by}\;\;\;\pm(e_i/\sqrt{n}-e_j/\sqrt{n})\,, \label{rate4}
\end{eqnarray}
where the constants $\,\zeta_1,\ldots,\zeta_d>0\,$ are well-defined by \eqref{rate1} due to Assumption \ref{main_asmp}. Here $\,(\cdot)_+$ and $(\cdot)_-\,$ stand for the positive and negative parts, respectively. As $\,n\to\infty\,$, the \textsc{Markov} chains $\,S^n (\cdot)$, $n\in\nn\,$ converge in distribution  to $\,(t\mapsto\mathfrak{b}\,t+W(t))\,$, with respect to the \textsc{Skorokhod} topology on $\,D([0,\infty,\rr^d))\,$; this is a consequence of \textsc{Donsker}'s Invariance Principle, and of the Lemma on page 151 in \cite{Bi}.

\subsection{The Approximating \textsc{Markov} Chains.} 
\label{sec2.0}

Next, we define the continuous time Markov chains $\,X^n(\cdot)$, $n\in\nn\,$ on $\,\frac{1}{\sqrt{n}}\,\zz_+^d$, $n\in\nn\,$, respectively, by the following procedure. When $\,X^n(\cdot)\,$ is at a site with all coordinates positive, we let its jump rates be given by the corresponding jump rates of $\,S^n(\cdot)\,$. Now, suppose $\,X^n(\cdot)\,$ has reached a site $\,x\,$ on a boundary face
\[
\mathcal{B}_I:=\Big\{x\in\frac{1}{\sqrt{n}}\,\zz_+^d:\;x_i=0,\;i\in I,\quad x_j>0,\;j\notin I\;\Big\}
\]
for some non-empty set $\,I\subset\{1,\ldots,d\}\,$. 

\smallskip
\noindent
$\bullet\;$ If $\,I=\{i\}\,$ for some $\,i\in\{1,\ldots,d\}\,$, then we allow $\,X^n(\cdot)\,$ to jump from $\,x\,$ to sites of the form $$\,x+e_i/\sqrt{n}+\mathrm{sgn}(\mathfrak{a}_{i,j})e_j/\sqrt{n}\,,  \qquad \,x+e_i/\sqrt{n}\,,\qquad \,x\pm e_j/\sqrt{n}  
$$ 

\medskip
\noindent
for some $j\in\{1,\ldots,d\}\backslash\{i\}\,$, with corresponding jump rates given respectively by
\eq\label{boundaryrates1}
n\,\mathfrak{r}_{i,i}\,\frac{\,|\mathfrak{a}_{i,j}|\,}{\mathfrak{a}_{i,i}}\,,
\qquad n\,\mathfrak{r}_{i,i}\,\frac{2}{\,\zeta_i\,}
\qquad\mathrm{and}\qquad n\,(\mathfrak{r}_{j,i})_{\pm}+n\,c_{\{i\},j,\pm}\,\,.
\en
Here $\,\mathrm{sgn} =\mathbf{1}_{[0,\infty)}-\mathbf{1}_{(-\infty,0)}\,$ is the sign function. The constants $\,c_{\{i\},j,\pm}$ are assumed to be positive,  and to satisfy 
 $$
 \,c_{\{i\},j,+}-c_{\{i\},j,-}\,=\,-\mathfrak{r}_{i,i}\,\frac{\,\mathfrak{a}_{i,j}\,}{\mathfrak{a}_{i,i}}\,; 
$$ 
 they will be specified concretely later. We remark that $\,\mathfrak{r}_{i,i}\,$ is positive by Assumption \ref{main_asmp}.

\medskip
\noindent
$\bullet\;$ If $\,|I|\geq2\,$, then we allow $\,X^n(\cdot)\,$ to jump from $\,x\,$ to sites of the following four types:
\eq
\begin{split}
& x+e_i/\sqrt{n} \quad\mathrm{if\;\;\;}i\in I,\qquad x+e_j/\sqrt{n} \quad\mathrm{if\;\;\;}j\notin I, \\
& x-e_j/\sqrt{n} \quad\mathrm{if\;\;\;}j\notin I,\qquad x+e_i/\sqrt{n}+e_j/\sqrt{n} \quad\mathrm{if\;\;\;}i,j\in I,\,|I|=2,\,\mathfrak{a}_{i,j}>0\,.
\end{split}
\en
We define the corresponding jump rates as
\begin{eqnarray}\label{boundaryrates}
\quad n\,c_{I,i}\,\sum_{\ell\in I} \mathfrak{r}_{i,\ell}\,,
\;\;\; n\,\sum_{\ell\in I} (\mathfrak{r}_{j,\ell})_+ + n\,c_{I,j,+}\,,
\;\;\; n\,\sum_{\ell\in I} (\mathfrak{r}_{j,\ell})_- + n\,c_{I,j,-}\,, 
\;\;\; n\,c_{I,i,j}\,\sum_{\ell\in I} \mathfrak{r}_{i,\ell}\,,
\end{eqnarray}
respectively, where the constants are positive and obey $$\,c_{I,i}=1\, ~\,\,~ \text{if}~\,|I|\neq2\, , \qquad \,c_{\{i,j\},i}+c_{\{i,j\},i,j}=1\,\qquad \text{ and}\quad\,c_{I,j,+}=c_{I,j,-}\,,$$ and will be specified concretely later. We note that   Assumption \ref{main_asmp} implies  $\sum_{\ell\in I} \mathfrak{r}_{i,\ell}>0\,$. 

\subsection{Semimartingale Decompositions.} 
\label{sec2.1}

Next, we fix an $\,n\in\nn\,$ and derive the semimartingale decomposition of $\,X^n(\cdot)\,$. To this end, we introduce the clocks 
\eq
\label{clocks}
T^n(t)=\int_0^t \mathbf{1}_{\{X^n_1(s)>0,\ldots,X^n_d(s)>0\}}\,\mathrm{d}s\qquad\text{and}\qquad L^n_i(t)=\sqrt{n}\cdot\int_0^t \mathbf{1}_{\{X^n_i(s)=0\}}\,\mathrm{d}s 
\en
for $\,i=1,\ldots,d  \,$. Letting $\,M^n(\cdot)\,$ be the martingale given by the compensated sum of jumps of $\,X^n(\cdot)\,$, one can decompose $\,X^n(\cdot)\,$ according to 
\eq\label{decompn}
X^n(t)=X^n(0)+C^n(t)+M^n(t)+{\mathfrak R}\,L^n(t),\qquad 0\leq t<\infty,
\en
where $\,C^n(\cdot)=\mathfrak{b}\,T^n(\cdot)\,$ is the compensator of the jumps of $\,X^n(\cdot)\,$ originating from sites with positive coordinates and $\,L^n(\cdot)=(L^n_1(\cdot),\ldots,L^n_d(\cdot))'\,$. 

\medskip

We have the following convergence result.

\begin{thm}\label{conv}
Suppose that the initial positions $\,X^n(0)$, $n\in\nn\,$ converge in distribution to a limit $\,X(0)\,$. Then the 
processes $\,X^n(\cdot)$, $n\in\nn\,$ converge in distribution with respect to the \textsc{Skorokhod} topology on $\,D([0,\infty),\rr^d)\,$ to the RBM $\,X(\cdot)\,$ of Theorem \ref{main_thm} with   initial condition $\, X(0)\,$. Moreover, one has the convergences in distribution
\eq\label{convergences}
C^n(\cdot)\Longrightarrow (t\mapsto\mathfrak{b}t),\qquad M^n(\cdot)\Longrightarrow W(\cdot),\qquad L^n(\cdot)\Longrightarrow L(\cdot)\,,
\en
as well as  
\eq\label{bdryprop}
\Big(t\mapsto\sqrt{n}\cdot\int_0^t \mathbf{1}_{\{X^n_i(s)=X^n_j(s)=0\}}\,\mathrm{d}s\Big) \Longrightarrow 0 
\en
for any $\,1\leq i<j\leq d\,$ and with respect to the same topology.
\end{thm}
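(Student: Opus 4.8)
**The plan is to prove Theorem~\ref{conv} via a tightness-plus-identification argument, using the Skorokhod decomposition together with an oscillation inequality to control the regulating terms.**

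First I would establish \emph{tightness} of the family $\{X^n(\cdot)\}_{n\in\nn}$ in $D([0,\infty),\rr^d)$. The natural route is to argue tightness for the ``free'' part first: by Donsker's invariance principle and the Lemma on page~151 of \cite{Bi}, the space-homogeneous chains $S^n(\cdot)$ converge to $(t\mapsto\mathfrak{b}t+W(t))$, so in particular $C^n(\cdot)\Rightarrow(t\mapsto\mathfrak{b}t)$ and $M^n(\cdot)\Rightarrow W(\cdot)$ once one checks that the jumps occurring from boundary sites are asymptotically negligible in the quadratic variation of $M^n$ (the boundary rates in \eqref{boundaryrates1}, \eqref{boundaryrates} are $O(n)$ but the boundary is visited for a vanishing fraction of time, and each jump is $O(1/\sqrt n)$). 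Then, because $\mathfrak{R}$ is invertible and its principal submatrices have positive row sums (Assumption~\ref{main_asmp}), I would invoke an \emph{oscillation inequality} of the type used for the Skorokhod problem in the orthant (cf.\ the completely-$S$ theory of \cite{TW}, or Bernard--El Kharroubi type bounds): the oscillation of $X^n$ and of $L^n$ on a small interval is bounded by a constant times the oscillation of the free process $X^n(0)+C^n(\cdot)+M^n(\cdot)$ on that interval. This transfers tightness from the free part to $(X^n,L^n)$, and simultaneously gives $C$-tightness, i.e.\ any weak limit has continuous paths, since the jumps are of size $1/\sqrt n\to 0$.

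Next I would \emph{identify the limit}. Along a convergent subsequence, write $(X^n,C^n,M^n,L^n)\Rightarrow(X,\mathfrak{b}\cdot,W,L)$ on a common probability space (Skorokhod representation). Passing to the limit in \eqref{decompn} gives $X(t)=X(0)+\mathfrak{b}t+W(t)+\mathfrak{R}L(t)$. One must verify: (i) $W$ is a Brownian motion with covariance $\mathfrak{A}$ and independent of $X(0)$ --- this follows from the martingale central limit theorem, computing the predictable quadratic variation of $M^n$ from the rates \eqref{rate1}--\eqref{rate4}, which is designed so that $\langle M^n\rangle_t\to\mathfrak{A}\,t$; (ii) $L$ is continuous, nondecreasing, with $L(0)=0$ --- immediate from the corresponding properties of $L^n$ and $C$-tightness; (iii) the complementarity condition \eqref{flatoff}, i.e.\ $\int_0^\infty\mathbf 1_{\{X_i(t)>0\}}\,\mathrm dL_i(t)=0$. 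For (iii) I would use that $L^n_i$ increases only when $X^n_i=0$ (by the definition \eqref{clocks} of $L^n_i$), so $\int_0^\infty\mathbf 1_{\{X^n_i(s)>\varepsilon\}}\,\mathrm dL^n_i(s)=0$; letting $n\to\infty$ and then $\varepsilon\downarrow0$, using uniform convergence on compacts and the fact that $\{t:X_i(t)=0\}$ has empty interior a.s.\ (a consequence of the nondegeneracy of $\mathfrak{A}$), yields the claim. By the weak uniqueness from \cite{TW} (the row-sum condition implies completely-$S$), the limit law is the RBM $X(\cdot)$ of Theorem~\ref{main_thm}, so the full sequence converges and \eqref{convergences} holds.

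For \eqref{bdryprop} --- the statement that $\sqrt n\int_0^t\mathbf 1_{\{X^n_i(s)=X^n_j(s)=0\}}\,\mathrm ds\Rightarrow 0$ --- the point is that the limiting RBM spends zero time on the edge $\{x_i=x_j=0\}$, which has codimension~2; so one needs a quantitative bound showing the chain does not linger there. I would control the expected occupation of the set $\{X^n_i=X^n_j=0\}$ by comparing with a one-dimensional birth-death chain in, say, the $i$-coordinate near that edge: from such a site the total jump rate is of order $n$ and a definite fraction of that rate (coming from the terms $n\,c_{I,i}\sum_\ell\mathfrak r_{i,\ell}$ with positive row sums) pushes the $i$-coordinate up to $1/\sqrt n$, so the chain leaves the edge after an expected time $O(1/n)$, and returns only after an excursion; a renewal/occupation-measure estimate then gives $\ev[\sqrt n\int_0^t\mathbf 1_{\{X^n_i=X^n_j=0\}}\mathrm ds]=O(1/\sqrt n)$. \textbf{I expect this last estimate --- and more generally the oscillation inequality for the discrete Skorokhod map with a non-$M$-matrix reflection matrix --- to be the main obstacle}, since the usual contraction/monotonicity arguments available in the Harrison--Reiman class are unavailable here and must be replaced by the weaker row-sum/completely-$S$ structure, which only yields a priori bounds rather than Lipschitz continuity of the Skorokhod map.
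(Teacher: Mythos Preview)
Your argument for the convergences in \eqref{convergences} follows essentially the same route as the paper: tightness of the free part (the paper phrases this as $M^n$ being a uniformly Lipschitz time change of the compensated $S^n$, and invokes Corollary 3.7.4 in \cite{EK}), then the oscillation inequality (the paper cites Theorem~5.1 of \cite{Wi4} rather than \cite{TW}) to transfer tightness to $(X^n,L^n)$, and finally identification of the limit via weak uniqueness. One small point of order: you cannot conclude $C^n\Rightarrow(t\mapsto\mathfrak{b}t)$ \emph{before} the oscillation step, since $C^n=\mathfrak{b}\,T^n$ and $T^n(t)\to t$ is deduced \emph{from} the tightness of $L^n$ (via $t-T^n(t)\le\sum_i L^n_i(t)/\sqrt{n}$). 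The paper is careful to first establish tightness of $(C^n,M^n)$ without identifying the limits, and only then bootstrap.

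For \eqref{bdryprop} your approach and the paper's diverge. The paper does \emph{not} attempt a direct quantitative discrete estimate of the kind you sketch. Instead it argues at the level of the limit: the prelimit process in \eqref{bdryprop} is dominated by $L^n_i$, hence tight by \eqref{convergences}; any limit point $\Lambda$ then satisfies $\mathrm{d}\Lambda\le\mathrm{d}L_i$ and (by the same reasoning that yields \eqref{flatoff} for $L$) increases only on $\{X_j=0\}$, so
\[
\Lambda(t_2)-\Lambda(t_1)\;\le\;\int_{t_1}^{t_2}\mathbf{1}_{\{X_j(s)=0\}}\,\mathrm{d}L_i(s).
\]
The right-hand side vanishes by the boundary property of semimartingale RBMs established by \textsc{Reiman} and \textsc{Williams} \cite{RW}, namely that $L_i$ does not charge lower-dimensional faces. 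This completely sidesteps the obstacle you flagged: no renewal estimate or birth--death comparison is needed, and the argument uses only properties of the \emph{limiting} RBM. Your direct route may be salvageable, but the rate $O(1/\sqrt{n})$ you claim looks optimistic (the number of visits to a codimension-two set by a two-dimensional reflected walk is typically of logarithmic order, not $O(1)$), and in any case the soft argument via \cite{RW} is both shorter and robust.
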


\begin{proof}
To deduce \eqref{convergences}, one can follow the lines of the proof of Theorem 8 in \cite{KPS}. For the sake of completeness, we describe the main steps. 

First, one establishes the tightness of $\,C^n(\cdot)$, $n\in\nn\,$ and $\,M^n(\cdot)$, $n\in\nn\,$ on $\,D([0,\infty),\rr_+^d)$ by verifying the criterion in Corollary 3.7.4 of \cite{EK}; one  uses the facts that the sequence $\,S^n(\cdot)$, $n\in\nn\,$ converges, and that $M^n(\cdot)\,$ can be viewed as a \textsc{Lipschitz}-continuous time change of the compensated version of $\,S^n(\cdot)\,$ with the \textsc{Lipschitz} constant being bounded uniformly in $n$. Next, one employs the decomposition \eqref{decompn} together with the oscillation inequalities in Theorem 5.1 of \cite{Wi4} to deduce the tightness of the processes $\,(X^n(\cdot),C^n(\cdot),M^n(\cdot),L^n(\cdot))$, $n\in\nn\,$ on $\,D([0,\infty),\rr^{4d})\,$ from the tightness of the processes $\,(C^n(\cdot),M^n(\cdot))$, $n\in\nn\,$ via the criterion of Corollary 3.7.4 in \cite{EK}. At this point, the tightness of the sequence $\,L^n(\cdot)$, $n\in\nn\,$ shows that $\,T^n(\cdot)\Rightarrow(t\mapsto t)\,$ in $\,D([0,\infty),\rr)\,$, and consequently every limit point of $\,C^n(\cdot)+M^n(\cdot)$, $n\in\nn\,$ must have the same distribution as $\,(t\mapsto\mathfrak{b}\,t+W(t))\,$. Finally, one can show that every limit point of $\,(X^n(\cdot),C^n(\cdot),M^n(\cdot),L^n(\cdot))$, $n\in\nn\,$ satisfies \eqref{1} and \eqref{flatoff} and \eqref{convergences} readily follows from the uniqueness in distribution of the RBM $\,X (\cdot)\,$.      

\medskip

It remains to show \eqref{bdryprop}. It is clear from \eqref{convergences} that the sequence of prelimit expressions in \eqref{bdryprop} is tight and that every limit point $\,\Lambda (\cdot)\,$ in \eqref{bdryprop} must satisfy
\[
\forall\,0\le t_1<t_2<\infty:\quad \Lambda(t_2)-\Lambda(t_1)\leq L_i(t_2)-L_i(t_1).  
\]
Now, the same argument as in the proof of Proposition 9 in \cite{KPS} and the above  observation yield
\[
\forall\,0\le t_1<t_2<\infty:\quad \Lambda(t_2)-\Lambda(t_1)=\int_{t_1}^{t_2} \mathbf{1}_{\{X_j(s)=0\}}\,\mathrm{d}\Lambda(s)
\leq \int_{t_1}^{t_2} \mathbf{1}_{\{X_j(s)=0\}}\,\mathrm{d}L_i(s).
\]
However, the latter expression vanishes due to the boundary property of RBMs established in \cite{RW} and we end up with the claim $\, \Lambda (\cdot) \equiv 0\,$ of \eqref{bdryprop}.
\end{proof}

\subsection{Truncated Markov Chains.} 
\label{sec2.2}

It will be convenient for us to work with \textit{truncated} versions $\,Y^n(\cdot)$, $n\in\nn\,$ of the \textsc{Markov} chains $\,X^n(\cdot)$, $n\in\nn\,$ with state spaces of the form $$\mathcal{Y}^n:=\Big(\frac{1}{\,\sqrt{n\,}\,}\,\zz_+^d\Big)\cap\big[0,K_n\big]^d\,,$$ whose jump rates are given by the restriction of the jump rates of $\,X^n(\cdot)$, $n\in\nn\,$ to sites in $\,\mathcal{Y}^n\,$. Clearly, for any fixed $\,T\in(0,\infty)\,$ one can let the sequence $\,(K_n)_{n\ge1}\,$ grow fast enough to ensure that $\,Y^n(\cdot)$, $n\in\nn\,$ admit decompositions of the form \eqref{decompn}, for which the convergences \eqref{convergences} and \eqref{bdryprop} hold on $\,D([0,T],\rr^d)\,$. 

\smallskip
We write $ q^n_{x,y} $ for the rate at which $ Y^n(\cdot) $ jumps from site $\,x\,$ to site $\,y\,$. We  shall denote by   $\,\partial\mathcal{Y}^n\,$    the collection of sites in $\,\mathcal{Y}^n\,$ with at least one coordinate  equal to zero,   and   by $\, \partial_1\mathcal{Y}^n $ the collection of sites in $ \mathcal{Y}^n $   with at least one coordinate less than or equal to $\,\frac{1}{\sqrt{n}}\,$.   

\section{Duality} \label{sec3}

In this section we establish a duality relation between the processes $X(\cdot)$ and $\tilde{X}(\cdot)$, which will allow us to deduce Theorem \ref{main_thm}. We start with its discrete version. 

\begin{lemma} \label{duality_disc} 
Fix an $ n\in\nn $, and let $\,Y^n(\cdot)\,$ be the    \textsc{Markov} chain with   state space $\,\mathcal{Y}^n\,$ and generating matrix $q^n$ defined in subsection \ref{sec2.2}. Consider the  \textsc{Markov} semigroup $\,\big(\hat{P}^n_t\big)_{t\ge0}\,$ corresponding to the generating matrix $\,\hat{q}^{\,n}:=(q^n)'\,$, the transpose of $q^n$. 

There exists then a continuous-time \textsc{Markov} chain $\tilde{Y}^n(\cdot)$ on $\,\mathcal{Y}^n\,$, such that
\begin{enumerate}[(i)]
\item a decomposition
\eq\label{dual_decomp}
\tilde{Y}^n(t)=\tilde{Y}^n(0)+\tilde{C}^n(t)+\tilde{M}^n(t)+\mathfrak{R}^*\,\tilde{L}^n(t),\qquad 0\leq t<\infty\,
\en
analogous to \eqref{decompn} holds, so that, in particular, 
\eq
\tilde{L}^n_i(t)=\sqrt{n\,}\cdot\int_0^t \mathbf{1}_{\{\tilde{Y}^n_i(s)=0\}}\,\mathrm{d}s,\qquad i=1,\ldots,d\,;
\en
\item for each $\,t\in[0,\infty)\,$ and $\,x\in\mathcal{Y}^n\,$, the measure $\delta_x\hat{P}^n_t$ is absolutely continuous with respect to the distribution of $\,\tilde{Y}^n(t)\,$ given $\,\tilde{Y}^n(0)=x\,$, with   density of the form
\eq
\label{com}
\exp\Big(\int_0^t V^n(\tilde{Y}^n(s))\,\mathrm{d}s\Big)
\prod_{x\in\partial_1\mathcal{Y}^n} \prod_{y\sim x} \Big(\frac{\hat{q}^n_{x,y}}{\tilde{q}^n_{x,y}}\Big)^{\tilde{N}^n_{x,y}(t)}
\,\exp\Big(-\sum_{x\in\partial_1\mathcal{Y}^n} \tilde{T}^n_x(t)\,\sum_{y\sim x} (\hat{q}^n_{x,y}-\tilde{q}^n_{x,y})\Big).
\en
\end{enumerate} 

Moreover, for any functions $f,\,g\,$ in $\, C_c(\rr_+^d)$, the space of continuous functions on $\rr_+^d$ with compact support contained in the interior $\,\rr_{++}^d\,$of $\,\rr_+^d\,$, and for every $\, t \in [0,\infty)\,$, we have 
\eq
\label{duality0}
\sum_{x\in\mathcal{Y}^n} f(x)\,\big(\hat{P}^n_t\,g\big)(x)
= \sum_{x\in\mathcal{Y}^n} \ev^x\big[f(Y^n(t))\big]\,g(x)\,.
\en
\end{lemma}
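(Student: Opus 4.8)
The plan is to prove Lemma \ref{duality_disc} in two stages: first construct the dual chain $\tilde Y^n(\cdot)$ together with the decomposition (i) and the Radon--Nikodym formula (ii), and then derive the duality identity \eqref{duality0} as a consequence. The starting observation is that the transpose $\hat q^n=(q^n)'$ of a conservative generating matrix need not be conservative (the column sums of $q^n$ are generically nonzero), so the semigroup $(\hat P^n_t)_{t\ge0}$ is sub-Markovian with a killing/creation term. One therefore cannot simply take $\tilde Y^n$ to be the chain with generator $\hat q^n$. Instead I would define $\tilde Y^n(\cdot)$ to be a genuine continuous-time Markov chain on $\mathcal Y^n$ whose off-diagonal rates $\tilde q^n_{x,y}$ agree with $\hat q^n_{x,y}$ at all \emph{interior} sites $x\notin\partial_1\mathcal Y^n$, and are modified only at the finitely many sites in $\partial_1\mathcal Y^n$ so as to make the matrix conservative and to preserve the structural form needed for the decomposition \eqref{dual_decomp}; concretely, at a boundary site one keeps $\tilde q^n_{x,y}\ge\hat q^n_{x,y}$ for the relevant neighbours $y\sim x$. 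The key point is that away from $\partial_1\mathcal Y^n$ the dual rates are exactly the interior bulk rates \eqref{rate1}--\eqref{rate4}, which are the \emph{same} as the original bulk rates up to the sign of the drift, so the compensator of the interior jumps of $\tilde Y^n$ is $-\mathfrak b\,\tilde T^n(\cdot)$ and the ``reflection'' contributions at $\partial_1\mathcal Y^n$ assemble into $\mathfrak R^*\tilde L^n(\cdot)$ — this is where the dual reflection matrix $\mathfrak R^*$ enters, and verifying that the boundary drift terms sum up precisely to $\mathfrak R^*\tilde L^n$ is the combinatorial heart of step (i). With the rates so chosen, $\tilde N^n_{x,y}(t)$ (the number of $x\to y$ jumps up to time $t$) and $\tilde T^n_x(t)$ (the time spent at $x$ up to $t$) are well-defined, and formula \eqref{com} is then the standard Girsanov/likelihood-ratio formula between two Markov chains on the same state space that differ only at finitely many sites: the density of the law of $\hat q^n$-dynamics (killed appropriately) against the law of $\tilde Y^n$ is $\prod (\hat q^n_{x,y}/\tilde q^n_{x,y})^{\tilde N^n_{x,y}(t)}\exp(-\sum \tilde T^n_x(t)\sum_{y\sim x}(\hat q^n_{x,y}-\tilde q^n_{x,y}))$, and the extra factor $\exp(\int_0^t V^n(\tilde Y^n(s))\,\mathrm ds)$ absorbs the discrepancy between the (non-zero) diagonal of $\hat q^n$ and the conservative diagonal of $\tilde q^n$, i.e. $V^n(x)=\sum_y \hat q^n_{x,y}+\hat q^n_{x,x}$ is the local killing rate.

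For the duality identity \eqref{duality0}, the clean route is purely algebraic and does not need the chain $\tilde Y^n$ at all: for $f,g$ supported in the interior $\rr^d_{++}$ (so that, for $n$ large, their supports avoid $\partial_1\mathcal Y^n$ and the truncation boundary), one has $\langle f,\hat P^n_t g\rangle = \langle f, e^{t(q^n)'}g\rangle = \langle e^{tq^n}f, g\rangle = \sum_x (e^{tq^n}f)(x)\,g(x) = \sum_x \ev^x[f(Y^n(t))]\,g(x)$, where the middle equality is just $\langle f, A'g\rangle=\langle Af,g\rangle$ for the finite matrix $A=e^{tq^n}$ and its exponential, and the last equality is the definition of the (honest, conservative) semigroup of $Y^n$. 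So \eqref{duality0} is essentially immediate once one is careful that all sums are finite — which is guaranteed by the truncation to $\mathcal Y^n$ and the compact support of $f,g$. The role of the compact-support-in-the-interior hypothesis is only to make the later scaling limit work (one wants to avoid the boundary and the artificial truncation at level $K_n$), not for the identity itself at fixed $n$.

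The main obstacle, and the step deserving the most care, is the bookkeeping in (i): one must exhibit explicit positive constants — the $c_{\{i\},j,\pm}$, $c_{I,i}$, $c_{I,i,j}$, $c_{I,j,\pm}$ left unspecified in subsection \ref{sec2.0}, together with their duals — such that (a) the dual rates $\tilde q^n$ are genuine (nonnegative, conservative) jump rates obtained from $\hat q^n$ by modification only on $\partial_1\mathcal Y^n$, (b) the compensator of the non-boundary jumps of $\tilde Y^n$ equals $-\mathfrak b\,\tilde T^n(\cdot)$ with $\tilde T^n$ the interior clock, and (c) the boundary contributions organize into exactly $\mathfrak R^*\tilde L^n(\cdot)$. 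Checking (c) amounts to matching, face-by-face and at multi-face corners, the net displacement produced by the dual boundary jumps against the columns of $\mathfrak R^*=(\mathfrak r^*_{i,j})$; this is precisely where the defining relation of $\mathfrak R^*$ (reflection of the columns of $\mathfrak R$ about the inward normals) is used, and where the constraints such as $c_{\{i\},j,+}-c_{\{i\},j,-}=-\mathfrak r_{i,i}\,\mathfrak a_{i,j}/\mathfrak a_{i,i}$ and $c_{I,j,+}=c_{I,j,-}$ come from. Once these constants are pinned down, everything else — the martingale property of $\tilde M^n$, the identification of $\tilde L^n_i$ as $\sqrt n\int_0^t\mathbf 1_{\{\tilde Y^n_i(s)=0\}}\,\mathrm ds$, and the likelihood formula \eqref{com} — follows from standard generator computations and the elementary theory of changes of measure for pure-jump Markov processes.
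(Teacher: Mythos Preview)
Your proposal is correct and follows essentially the same approach as the paper. The paper organizes the argument slightly differently: it first introduces an intermediate chain $\hat{Y}^n(\cdot)$ whose generator has the off-diagonal entries of $\hat{q}^n$ but rows summing to zero, identifies $(\hat{P}^n_t)$ as the \textsc{Feynman--Kac} semigroup for $\hat{Y}^n$ with potential $V^n$, and then passes from $\hat{Y}^n$ to $\tilde{Y}^n$ by a second (Girsanov) change of measure localized on $\partial_1\mathcal{Y}^n$; your single change-of-measure description is the composition of these two steps. One minor simplification relative to your write-up: the paper does \emph{not} carry out the face-by-face combinatorial verification you describe in (c) at this stage---it simply declares the rates of $\tilde{Y}^n$ to be of the structural form \eqref{boundaryrates1}, \eqref{boundaryrates} with $\mathfrak{R}$ replaced by $\mathfrak{R}^*$, which by the design of those rate templates automatically yields a decomposition of the form \eqref{dual_decomp}; the specific constants are pinned down only later, in the proof of Theorem~\ref{dual}, where they are chosen to make the density \eqref{com} tractable in the scaling limit. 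Your proof of the duality identity \eqref{duality0} via $(e^{tq^n})' = e^{t(q^n)'}$ is exactly the paper's.
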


\medskip
\noindent
\textsc{Remark on Notation:} 
In the expressions of (\ref{com}), (\ref{duality0}) we have set 
\eq\label{potential}
V^n(x)\,:=\,\sum_{y\sim x}\, \big(q^n_{y,x}-q^n_{x,y}\big)\,,\qquad x\in\mathcal{Y}^n\,;
\en
the notation $\,y\sim x\,$ means that site $\,y\,$ can be reached by $\,\tilde{Y}^n(\cdot)\,$ from $\,x\,$ in one jump; the number $\,\tilde{q}^{\,n}_{x,y}\,$ stands for the rate of the jump from $\,x\,$ to $\,y\,$ by $\,\tilde{Y}^n(\cdot)\,$; the quantity $\,\tilde{T}^n_x(t) = \int_0^t \mathbf{ 1}_{\{ \tilde{Y}^n (s)=x \}} \mathrm{d} s\,$ is the time spent by $\,\tilde{Y}^n(\cdot)\,$ at $\,x\,$ during the time interval $\,[0,t]\,$; whereas the quantity $\,\tilde{N}^n_{x,y}(t)\,$ denotes the number of jumps from $ x $ to $ y $ by $\,\tilde{Y}^n(\cdot)\,$ up to time $\,t\,$. In the same spirit, and for later usage in (\ref{simplified2}), we define the quantity $\,\tilde{N}^n_{I,J}(t)\,$ as the number of jumps from 
the boundary $\,\mathcal{B}_I\,$ to the boundary $\,\mathcal{B}_J\,$, for two distinct subsets $\, I, \, J$ of $\, \{ 1, \cdots, d\}\,$. 
 
\begin{proof}
We start by considering the matrix indexed by $\,\mathcal{Y}^n\,$, whose off-diagonal elements coincide with those of $\,\widehat{q}^{\,\,n} = ( q^n)'\,$ and whose rows sum to zero. Such a matrix is the generator of a continuous-time \textsc{Markov} chain $\,\hat{Y}^n(\cdot)\,$ on $\,\mathcal{Y}^n\,$ with jump rates from site $\,x\,$ to site $\,y\,$ given by $\,q^n_{y,x}\,$. We note further that $\,\widehat{q}^{\,\,n}\,$ is given by the sum of the generating matrix of this \textsc{Markov} chain $\,\hat{Y}^n(\cdot)\,$, and of a diagonal matrix with diagonal entries given by the values of the function $\,V^n\,$ from \eqref{potential}. 

It   follows now from the \textsc{Feynman-Kac} formula  that $\,(\hat{P}^n_t)_{t\ge0}\,$ is the \textsc{Feynman-Kac} semigroup corresponding to this \textsc{Markov} chain $ \hat{Y}^n(\cdot) $ and potential $\,V^n\,$; see  for instance \cite{RoW}, section IV.22, Example (22.11). In other words, for each $\,t\in[0,\infty)\,$ and $\,x\in\,\mathcal{Y}^n\,$, the measure $\,\delta_x\hat{P}^n_t\,$ is absolutely continuous with respect to the distribution at time $\,t\,$ of the   \textsc{Markov} chain $\,\hat{Y}^n(\cdot)\,$ started from $\,x\,$,  with   density given by 
\[
\exp\Big(\int_0^t V^n \big(\hat{Y}^n(s)\big)\,\mathrm{d}s\Big).
\] 

Note that the \textsc{Markov} chain $\,\hat{Y}^n(\cdot)\,$ does not admit a decomposition of the form \eqref{dual_decomp}, because the compensator of its jumps originating from the boundary of $\,\mathcal{Y}^n\,$ is not of the form $\,\mathfrak{R}^*\,\hat{L}^n(\cdot)\,$. However, one can change the jump rates   of $\,\hat{Y}^n(\cdot)\,$ from and to $\,\partial\mathcal{Y}^n\,$, to rates of the type \eqref{rate1}-\eqref{rate4}, \eqref{boundaryrates1}, \eqref{boundaryrates} (with $\,\mathfrak{b}\,$ replaced by $\,0\,$ and $\,\mathfrak{R}\,$ replaced by $\,\mathfrak{R}^*\,$) by means of an equivalent change of measure, so that the decomposition \eqref{dual_decomp} starts to hold under the new measure. Denoting the resulting \textsc{Markov} chain by $\,\tilde{Y}^n(\cdot)\,$ and its generating matrix by $\,\tilde{q}^{\,n}\,$, and changing the measure back to the original one, we end up with \eqref{com}.       

 \smallskip
Now we turn to the proof of \eqref{duality0}. To this end, we let $\,v_f$, $v_g\,$ be vectors with coordinates indexed by the elements of $\,\mathcal{Y}^n\,$ such that the $\,x$-th coordinate of $\,v_f\,$ is given by $\,f(x)\,$ and the $\,x$-th coordinate of $\,v_g\,$ is given by $\,g(x)\,$ for each $\,x\,$. Then \eqref{duality0} can be cast equivalently as
\eq\label{vector_eq}
(v_f)'\,\exp \big(\,\widehat{q}^{\,\,n}\,t\big)\,v_g=(v_g)'\,\exp \big(\,q^{\,n}\,t\big)\,v_f\,.
\en
Clearly now, \eqref{vector_eq} is a consequence of $\,\widehat{q}^{\,\,n}=(q^n)'\,$. 
\end{proof}

From Lemma \ref{duality_disc} we can deduce the following duality relation for the RBMs $\,X(\cdot)$, $\tilde{X} (\cdot)\,$ of Theorem \ref{main_thm}, which is of interest in its own right. 

\begin{thm} \label{dual}
Let $\,(P_t)_{t\ge0}\,$ be the transition semigroup of the RBM $\,X(\cdot)\,$ in Theorem \ref{main_thm}, and define $\,\big(\widehat{P}_t\big)_{t\ge0}\,$ as the \textsc{Feynman-Kac} semigroup on $\,\rr_+^d\,$ given by
\eq
(\hat{P}_t\,f)(x)=\ev^x\Big[\exp\Big(-2\,\sum_{i=1}^d \,\frac{\mathfrak{b}_i\,\mathfrak{r}_{i,i}}{\mathfrak{a}_{i,i}}\,\tilde{L}_i(t)\Big)\,f \big(\tilde{X}(t)\big)\Big],\qquad f\in C_c(\rr_+^d).
\en
Then the two semigroups are in duality with respect to   \textsc{Lebesgue}  measure on $\,\rr_+^d\,$:
\eq\label{duality}
\forall\,f,\,g\in C_c(\rr_+^d),\;t\ge0:\quad \int_{\rr_+^d} f(x)\,(\hat{P}_t\,g)(x)\,\mathrm{d}x
= \int_{\rr_+^d} (P_t\,f)(x)\,g(x)\,\mathrm{d}x.
\en 
\end{thm}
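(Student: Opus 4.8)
The plan is to obtain the continuum duality relation \eqref{duality} as a scaling limit of the discrete duality relation \eqref{duality0} established in Lemma \ref{duality_disc}. First I would fix $f,g\in C_c(\rr_+^d)$ with supports in $\rr_{++}^d$ and rewrite \eqref{duality0} in the normalized form
\eq\label{plan1}
n^{-d/2}\sum_{x\in\mathcal{Y}^n} f(x)\,\big(\hat{P}^n_t g\big)(x)
= n^{-d/2}\sum_{x\in\mathcal{Y}^n} \ev^x\big[f(Y^n(t))\big]\,g(x)\,,
\en
so that both sides become Riemann sums converging to Lebesgue integrals. The right-hand side is the easy one: by Theorem \ref{conv} (applied to the truncated chains $Y^n(\cdot)$, which share the convergence properties by the choice of $K_n$ in subsection \ref{sec2.2}), for each fixed starting point $x$ one has $Y^n(t)\Rightarrow X(t)$, hence $\ev^x[f(Y^n(t))]\to (P_tf)(x)$ by bounded convergence; combined with uniform-in-$x$ bounds and the continuity of $P_tf$ (Feller property, cited from \cite{TW}), the Riemann sum on the right of \eqref{plan1} converges to $\int_{\rr_+^d}(P_tf)(x)g(x)\,\mathrm dx$, which is the right-hand side of \eqref{duality}.

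For the left-hand side of \eqref{plan1}, the point is to identify the limit of $\hat P^n_t g$. Using part (ii) of Lemma \ref{duality_disc}, $\delta_x\hat P^n_t$ has a density with respect to the law of $\tilde Y^n(t)$ given by the expression \eqref{com}. The strategy is to show that, as $n\to\infty$: (a) the truncated dual chains $\tilde Y^n(\cdot)$ converge in distribution to the auxiliary RBM $\tilde X(\cdot)$ of \eqref{2} — this follows by exactly the same argument as Theorem \ref{conv}, since by construction the rates of $\tilde Y^n(\cdot)$ are of the type \eqref{rate1}--\eqref{rate4}, \eqref{boundaryrates1}, \eqref{boundaryrates} with $\mathfrak b\mapsto 0$ and $\mathfrak R\mapsto\mathfrak R^*$, whose row-sum positivity is guaranteed by Assumption \ref{main_asmp}; and (b) the three factors in \eqref{com} converge jointly with $\tilde Y^n(\cdot)$ to the corresponding continuum objects. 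The potential term: $V^n(x)=\sum_{y\sim x}(q^n_{y,x}-q^n_{x,y})$ vanishes at interior sites (the interior rates \eqref{rate1}--\eqref{rate4} are symmetric except for the drift term $-\sqrt n\,\mathfrak b_i$, whose contribution telescopes to $0$ in $V^n$) and is of order $\sqrt n$ only at sites within $O(1/\sqrt n)$ of the boundary, so $\int_0^t V^n(\tilde Y^n(s))\,\mathrm ds$ is asymptotically a linear combination of the local-time clocks $\tilde L^n_i(t)$; a careful bookkeeping of the boundary rates \eqref{boundaryrates1}, \eqref{boundaryrates} identifies the coefficient of $\tilde L^n_i$ as $-2\mathfrak b_i\mathfrak r_{i,i}/\mathfrak a_{i,i}$, matching the exponent in $\hat P_t$. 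The product $\prod(\hat q^n_{x,y}/\tilde q^n_{x,y})^{\tilde N^n_{x,y}(t)}$ and the last exponential in \eqref{com} together form a Radon--Nikodym density that, by the same reasoning, converges to $1$: at interior sites $\hat q^n=\tilde q^n$ identically, and the boundary discrepancies contribute terms controlled by $\tilde N^n_{I,J}(t)$ with $I\neq J$ and by $\tilde T^n_x(t)$ for $x$ near the boundary, both of which vanish in the limit by \eqref{bdryprop} (the cross-face occupation) and by the fact that the dual chain spends negligible time at any fixed boundary site. Hence $\delta_x\hat P^n_t\Rightarrow$ the law of $\tilde X(t)$ weighted by $\exp(-2\sum_i(\mathfrak b_i\mathfrak r_{i,i}/\mathfrak a_{i,i})\tilde L_i(t))$, i.e. $(\hat P^n_tg)(x)\to(\hat P_tg)(x)$, and the left side of \eqref{plan1} converges to $\int_{\rr_+^d}f(x)(\hat P_tg)(x)\,\mathrm dx$, giving \eqref{duality}.

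The main obstacle I expect is the rigorous control of the Radon--Nikodym factor in \eqref{com} — showing that the product over near-boundary sites $\prod_{x\in\partial_1\mathcal Y^n}\prod_{y\sim x}(\hat q^n_{x,y}/\tilde q^n_{x,y})^{\tilde N^n_{x,y}(t)}$ together with the compensating exponential stays bounded in $L^1$ (uniform integrability, needed to pass from weak convergence of the integrand to convergence of expectations) and converges to $1$. The delicate issue is that $\tilde N^n_{x,y}(t)$ counts $O(\sqrt n)$ jumps per unit local time, so one is exponentiating a quantity that is a difference of two large terms; the cancellation must be exploited via the constraint $c_{I,i}+\cdots=1$ and the relations among the $c_{\{i\},j,\pm}$, $c_{I,j,\pm}$ prescribed in subsection \ref{sec2.0}. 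This is presumably where the not-yet-specified constants get pinned down, and where an argument of martingale / exponential-moment type, or an explicit comparison of the generators of $\hat Y^n$ and $\tilde Y^n$ acting on smooth test functions, will be required. Once uniform integrability is in hand, Skorokhod representation lets one upgrade the joint weak convergence to the desired convergence of integrals, and a final localization/truncation argument (removing the cutoff $K_n$, using that $f,g$ have compact support and the invariant-measure tail bounds from \cite{HW}) completes the proof.
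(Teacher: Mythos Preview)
Your overall strategy is exactly the paper's: plug the density \eqref{com} into the discrete duality \eqref{duality0} and pass to the limit $n\to\infty$. The treatment of the right-hand side and the identification of uniform integrability as the key technical hurdle are both correct. However, your decomposition of the limit of \eqref{com} is wrong, and this is where the proof breaks down.

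You claim that the potential factor $\exp\big(\int_0^t V^n(\tilde Y^n(s))\,\mathrm{d}s\big)$ alone converges to the local-time exponential with coefficient $-2\mathfrak b_i\mathfrak r_{i,i}/\mathfrak a_{i,i}$, while the jump-count product together with the third exponential converges to $1$. In fact $V^n$ is of order $n$, not $\sqrt n$, both on $\partial\mathcal Y^n$ and on the adjacent layer. Already in one dimension one finds $V^n(0)=n(\mathfrak a/2-\mathfrak r)-\sqrt n\,\mathfrak b$ and $V^n(1/\sqrt n)=n(\mathfrak r-\mathfrak a/2)$, so $\int_0^t V^n = n(\mathfrak a/2-\mathfrak r)\big(T^n_0-T^n_{1/\sqrt n}\big)-\mathfrak b\,\tilde L^n(t)$; the first term is of order $\sqrt n$ and diverges unless $\mathfrak r=\mathfrak a/2$, and the finite remainder $-\mathfrak b\,\tilde L^n(t)$ does not carry the required coefficient. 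Since $V^n$ is determined by $q^n$ alone, no choice of the free constants in $\tilde q^n$ can repair this.

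The paper proceeds differently. Using $\hat q^n_{x,y}=q^n_{y,x}$, it combines the \emph{first and third} factors of \eqref{com} into $\exp\big(\sum_x \tilde T^n_x(t)\sum_{y\sim x}(\tilde q^n_{x,y}-q^n_{x,y})\big)$ and then \emph{chooses} the still-unspecified rates of $\tilde Y^n$ so that $\sum_{y\sim x}(\tilde q^n_{x,y}-q^n_{x,y})=0$ for every $x$; this makes the combined exponential identically $1$. Only the product $\prod(q^n_{y,x}/\tilde q^n_{x,y})^{\tilde N^n_{x,y}(t)}$ survives, and a further pinning of the boundary constants (forcing the within-face ratios to be $1$ and matching the cross-face ratios in reciprocal pairs) reduces it to \eqref{simplified2}. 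The local-time exponential with the exact coefficient $-2\mathfrak b_i\mathfrak r_{i,i}/\mathfrak a_{i,i}$ then emerges from the boundary-crossing ratios raised to the number of crossings, via
\[
\Big(1-\frac{2\,\mathfrak b_i}{\sqrt n\,\mathfrak a_{i,i}}\Big)^{\sqrt n\,\mathfrak r_{i,i}\,\tilde L^n_i(t)}
\;\longrightarrow\; e^{-2\,\mathfrak b_i\mathfrak r_{i,i}\tilde L_i(t)/\mathfrak a_{i,i}}\,,
\]
and uniform integrability of this product is obtained from an exponential-moment bound via the oscillation inequality of \cite{Wi4}. So the free constants are used not to make ``product $\times$ third $\to 1$'' as you guess, but to kill the exponential outright and to collapse the product to a tractable form.
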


\begin{proof} The   idea of the argument is  to obtain \eqref{duality} by   plugging   \eqref{com} into \eqref{duality0} and   taking the limit as $\,n\to\infty\,$. To this end, we study the asymptotics of the expression in \eqref{com} as $\,n\to\infty\,$ in Step 1 of the proof, and pass to the limit $\,n\to\infty\,$ in \eqref{duality0} in Steps 2 and 3. 

\medskip
\noindent\textit{Step 1.} We note first that the identity $\,\hat{q}^{\,n}_{x,y}= \tilde{q}^{\,n}_{x,y}=q^n_{y,x}\,$   for all $\,x\notin\partial_1\mathcal{Y}^n\,$ allows to rewrite \eqref{com} as
\begin{eqnarray*}
\exp\Big(\sum_{x\in\mathcal{Y}^n} \tilde{T}^n_x(t)\,\sum_{y\sim x} (q^n_{y,x}-q^n_{x,y})\Big)
\prod_{x\in\partial_1\mathcal{Y}^n} \prod_{y\sim x} \Big(\frac{q^n_{y,x}}{\tilde{q}^n_{x,y}}\Big)^{\tilde{N}^n_{x,y}(t)}
\,\exp\Big(-\sum_{x\in\mathcal{Y}^n} \tilde{T}^n_x(t)\,\sum_{y\sim x} (q^n_{y,x}-\tilde{q}^n_{x,y})\Big) \\
=\exp\Big(\sum_{x\in\mathcal{Y}^n} \tilde{T}^n_x(t)\,\sum_{y\sim x} (\tilde{q}^n_{x,y}-q^n_{x,y})\Big)
\prod_{x\in\partial_1\mathcal{Y}^n} \prod_{y\sim x} \Big(\frac{q^n_{y,x}}{\tilde{q}^n_{x,y}}\Big)^{\tilde{N}^n_{x,y}(t)}.
\end{eqnarray*}
Moreover, we choose the jump rates of $\,\tilde{Y}^n(\cdot)\,$ in such a way that $\,\sum_{y\sim x} (\tilde{q}^n_{x,y}-q^n_{x,y})=0\,$ for all $\,x\in\mathcal{Y}^n\,$, which reduces the above expression to 
\eq\label{simplified}
\prod_{x\in\partial_1\mathcal{Y}^n} \prod_{y\sim x} \Big(\frac{q^n_{y,x}}{\tilde{q}^n_{x,y}}\Big)^{\tilde{N}^n_{x,y}(t)}.
\en
More specifically, for every fixed $\,i\in\{1,\ldots,d\}\,$, we define the jump rates $\,z_j\,$ of $\,\tilde{Y}^n(\cdot)\,$ from sites in $\,\mathcal{B}_{\{i\}}\,$ in the directions $\,e_i/\sqrt{n}+\mathrm{sgn}(\mathfrak{a}_{i,j})\,e_j/\sqrt{n}$, $j\in\{1,\ldots,d\}\backslash\{i\}\,$, and the jump rate $\,z_i\,$ in the direction $\,e_i/\sqrt{n}\,$, by solving the system of equations
\eq
\frac{z_j}{n\,|\mathfrak{a}_{i,j}|/2}=\frac{z_i}{n\,\mathfrak{a}_{i,i}/\zeta_i-\sqrt{n}\,\mathfrak{b}_i}\,,
\quad j\in\{1,\ldots,d\}\backslash\{i\}\,,
\qquad\qquad\sum_{k=1}^d z_k=n\,\mathfrak{r}_{i,i}\,,
\en 
where we set $\,z_j=0\,$ and eliminate $\,z_j\,$ from the system of equations whenever $\,\mathfrak{a}_{i,j}=0\,$. In particular, we obtain
\eq
z_i=n\,\mathfrak{r}_{i,i}\,
\frac{n\,\mathfrak{a}_{i,i}/\zeta_i-\sqrt{n}\,\mathfrak{b}_i}{n\,\mathfrak{a}_{i,i}/2-\sqrt{n}\,\mathfrak{b}_i}\,.
\en

\medskip
\noindent
In addition, we select the constants in \eqref{boundaryrates1}, \eqref{boundaryrates} and in the definition of the corresponding jump rates of $\,\tilde{Y}^n(\cdot)\,$ so that, for every $\,I\subset\{1,\ldots,d\}\,$, the factors in \eqref{simplified} with $\,x,y\in\mathcal{B}_I\,$ are equal to $\,1\,$; and, for every pair of distinct sets $\,I\subset J\subset\{1,\ldots,d\}$ with $I\neq J$, $|J|\geq2\,$, the ratios $\,\frac{q^n_{y,x}}{\tilde{q}^n_{x,y}}$, $\frac{q^n_{x,y}}{\tilde{q}^n_{y,x}}\,$ entering \eqref{simplified} with $\,x\in\mathcal{B}_I$, $y\in\mathcal{B}_J\,$ depend only on $\,|I|,|J|\,$, with    
\eq
\gamma_{|I|,|J|}\,:=\,\frac{q^n_{y,x}}{\tilde{q}^{\,n}_{x,y}}\,=\,\Big(\frac{q^{\,n}_{x,y}}{\tilde{q}^n_{y,x}}\Big)^{-1}\quad\mathrm{and}\quad\gamma_{0,2}=\gamma_{1,2}\,. 
\en
Such a choice of constants can be found by starting with $\,\mathcal{B}_{\{1,\ldots,d\}}\,$, proceeding successively to boundaries of higher dimensions, and increasing the constants chosen before in each step if necessary. With these definitions, and in the light of \eqref{rate2} and \eqref{boundaryrates}, the product in \eqref{simplified} simplifies to  
\eq\label{simplified2}
\begin{split}
\gamma_{d-1,d}^{\tilde{M}^n_{d-1,d}(t)-\tilde{M}^n_{d,d-1}(t)}\cdot
\prod_{j=3}^{d-1} \Big(\gamma_{j-1,j}^{\tilde{M}^n_{j-1,j}(t)-\tilde{M}^n_{j,j-1}(t)}\,\gamma_{j,j+1}^{\tilde{M}^n_{j+1,j}(t)-\tilde{M}^n_{j,j+1}(t)}\Big)
\cdot\gamma_{1,2}^{\tilde{M}^n_{1,2}(t)+\tilde{M}^n_{0,2}(t)-\tilde{M}^n_{2,1}(t)-\tilde{M}^n_{2.0}(t)}    \\
\cdot\prod_{i=1}^d \,\Big( \frac{\,2\,n\,\mathfrak{r}_{i,i}/ \zeta_i\,} 
{n\,\mathfrak{a}_{i,i}/\zeta_i}\Big)^{\tilde{N}^n_{\emptyset,\{i\}}(t)}
\,\Big(\frac{n\,\mathfrak{a}_{i,i}/2-\sqrt{n}\,\mathfrak{b}_i}{n\,\mathfrak{r}_{i,i}}\Big)^{\tilde{N}^n_{\{i\},\emptyset}(t)}\,,
\end{split}
\en 
where we have written $\,\tilde{M}^n_{i,j}(t)\,$ for the total number of jumps of $\,\widetilde{Y}^n (\cdot)\,$ from sites belonging to a boundary $\,\mathcal{B}_I\,$ with $\,|I|=i\,$ to sites belonging to a boundary $\,\mathcal{B}_J\,$ with $\,|J|=j\,$ during the time interval $\,[0,t]\,$. 

\smallskip

Now, for $\,\tilde{Y}^n(0)\rightarrow x\in\mathcal{B}_\emptyset\,$, the convergence in distribution $\,\tilde{Y}^n(\cdot)\Longrightarrow\tilde{X}(\cdot)\,$ (due to an analogue of Theorem \ref{conv} for $\,\tilde{X}(\cdot)\,$) and the fact that $\,\tilde{X}(t)\,$ takes values in $\,\mathcal{B}_\emptyset\,$ with probability $1$ imply that the probability of the event $\,\{\tilde{Y}^n(0),\tilde{Y}^n(t)\in\mathcal{B}_\emptyset\}\,$ tends to $1$. Moreover, $\,\tilde{Y}^n(\cdot)\,$ can reach $\,\mathcal{B}_J\,$ from $\,\mathcal{B}_I\,$ for a set $\,J\neq I\,$ in one jump, if and only if $\,\big||I|-|J|\big|=1$ or $\,|I|=2,|J|=0$ or $\,|I|=0,|J|=2\,$. Putting the latter two facts together, we conclude that the exponents in the first line of \eqref{simplified2} converge to $0$ in distribution. In addition, we can put the same argument together with the convergences 
\[
\Big(t\mapsto\sqrt{n}\cdot\int_0^\cdot \mathbf{1}_{\{\tilde{Y}^n_i(s)=0\}}\,\mathrm{d}s\Big) 
\Longrightarrow \tilde{L}_i(\cdot)\quad\mathrm{and}\quad
\Big(t\mapsto\sqrt{n}\cdot\int_0^\cdot \mathbf{1}_{\{\tilde{Y}^n_i(s)=\tilde{Y}^n_j(s)=0\}}\,\mathrm{d}s\Big) 
\Longrightarrow 0
\]
to conclude that the exponents in the second line of \eqref{simplified2} are both on the order of $\,\sqrt{n}\,$ with the corresponding prefactor being given by $\,\mathfrak{r}_{i,i}\,\tilde{L}^n_i(t)\,$. The latter can be computed by viewing $\,\tilde{N}_{\{i\},\emptyset}(t)\,$ as the value of a standard \textsc{Poisson} process running under the clock $\,n\,\mathfrak{r}_{i,i}\,\int_0^t \mathbf{1}_{\{\tilde{Y}^n_i(s)=0\}}\,\mathrm{d}s\,$ and applying the Law of Large Numbers for \textsc{Poisson} processes. 

All in all, we see that the product in \eqref{simplified2} is asymptotically equivalent to 
\[
\prod_{i=1}^d \Big(\frac{2\,n\,\mathfrak{r}_{i,i}/\zeta_i}{n\,\mathfrak{a}_{i,i}/\zeta_i}
\cdot\frac{n\,\mathfrak{a}_{i,i}/2-\sqrt{n}\,\mathfrak{b}_i}{n\,\mathfrak{r}_{i,i}}\Big)
^{\sqrt{n}\,\mathfrak{r}_{i,i}\,\tilde{L}^n_i(t)}
=\prod_{i=1}^d \Big(1-\frac{2\,\mathfrak{b}_i}{\sqrt{n}\,\mathfrak{a}_{i,i}}\Big)^{\sqrt{n}\,\mathfrak{r}_{i,i}\,\tilde{L}^n_i(t)} \\
\Longrightarrow \prod_{i=1}^d e^{-2\,\frac{\mathfrak{b}_i\,\mathfrak{r}_{i,i}}{\mathfrak{a}_{i,i}}\,\tilde{L}_i(t)}.  
\]
Here, by {\it asymptotic equivalence} of two processes, we mean that the first of the two processes converges in distribution if and only if the second one converges in distribution, and in this case the two limits are indistinguishable. 

\medskip

\noindent\textit{Step 2.} We claim now that 
\eq\label{conv_inside}
\lim_{n\to\infty} (\hat{P}^n_t\,g)(x) 
= \ev^x\Big[\exp\Big(-2\,\sum_{i=1}^d \frac{\mathfrak{b}_i\,\mathfrak{r}_{i,i}}
{\mathfrak{a}_{i,i}}\,\tilde{L}_i(t)\Big)\,g(\tilde{X}(t))\Big]
\en
for all $\,x\in\rr_{++}^d\,$ and $\,t\ge0\,$ with the notation of \eqref{duality0}. In view of \eqref{com}, Step 1 and Theorem \ref{conv}, it suffices to show that for a $\,p>1\,$ the $\,p$-th moments of the random variables in \eqref{simplified2} are bounded uniformly in $\,n\,$. Indeed,  these random variables multiplied by $\,g(\tilde{Y}^n(t))\,$ would converge then to the random variable inside the expectation in \eqref{conv_inside} in $\,\mathbb{L}^1$ by the \textsc{Vitali}  Convergence Theorem. 

To prove the uniform boundedness of the moments, we observe from the explicit formula for the moment generating function of a \textsc{Poisson} random variable that it is enough to show
\eq\label{unifint}
\mathfrak{s}_p(x):=\sup_{n\in\nn}\;\ev^x\bigg[\,\prod_{i=1}^d \Big(1-\frac{2\,\mathfrak{b}_i}{\,\sqrt{n\,}\,\mathfrak{a}_{i,i}\,}\Big)^{(e^p-1)\,\sqrt{n}\,\mathfrak{r}_{i,i}\,\tilde{L}^n_i(t)}\,\bigg]<\infty
\en     
for a fixed $\,p>1\,$. Due to the decomposition \eqref{dual_decomp}, we can apply the oscillation inequality of Theorem 5.1 in \cite{Wi4} to find a constant $\,C_p<\infty\,$ depending only on $\,\mathfrak{R}^*\,$ and $\,p\,$, such that
\eq\label{oscbound}
\mathfrak{s}_p(x)\leq \sup_{n\in\nn}\;\ev^x\bigg[\prod_{i=1}^d \Big(1-\frac{2\,\mathfrak{b}_i}{\sqrt{n}\,\mathfrak{a}_{i,i}}\Big)^{C_p\,\sqrt{n}\,\sup_{0\le u\le t} \left|\tilde{C}^n(u)+\tilde{M}^n(u)\right|}\,\bigg].
\en
In addition, $\,\sup_{0\le u\le t} |\tilde{C}^n(u)|\,$ can be bounded uniformly in $\,n\,$, and $\,\tilde{M}^n (\cdot)\,$ can be viewed as a \textsc{Lipschitz} time change (with the \textsc{Lipschitz} constant bounded uniformly in $\,n\,$) of a compensated \textsc{Markov} chain $\,\bar{S}^n(\cdot)\,$ converging to $\,B(\cdot)\,$ and with jump rates being homogeneous in space and time. Hence, the right-hand side of \eqref{oscbound} is certainly finite if, for any fixed $\,t\,$, any fixed exponential moment of $\,\sup_{0\le u\le t} |\bar{S}^n(u)|\,$ can be bounded uniformly in $\,n\,$. The latter is a routine exercise that we leave to the reader. We also note that the resulting bound on $\,\mathfrak{s}_p(x)\,$ is independent of $\,x\,$.

\medskip
\noindent\textit{Step 3.} 
To complete the proof, we define for each $\,n\in\nn\,$ the function $\,\kappa_n:\;\rr_+^d\rightarrow\frac{1}{\sqrt{n}}\,\zz_+^d\,$ which rounds down all coordinates of a vector $\,x\in\rr_+^d\,$ to the nearest element of the lattice $\frac{1}{\sqrt{n}}\,\zz_+^d\,$. With this notation, and for $\,n\,$ large enough (more specifically, such that $\,[0,K_n]^d\,$ contains the supports of $\,f\,$ and $\,g\,$ of \eqref{duality}), the duality identity \eqref{duality0} can be cast as
\eq\label{prelim}
\forall\,\;t\ge0:\quad \int_{\rr_+^d} f(\kappa_n(x))\,\big(\hat{P}^n_t\,g\big)(\kappa_n(x))\,\mathrm{d}x
= \int_{\rr_+^d} \ev^{\kappa_n(x)}[f(Y^n(t))]\,g(\kappa_n(x))\,\mathrm{d}x.
\en  
Choosing cubes $R^f$, $R^g$ whose vertices have nonnegative integer coordinates and such that $R^f$ contains the support of $f$ and $R^g$ contains the support of $g$, we see that the integrands in \eqref{prelim} are bounded above by $\,\mathbf{1}_{R^f}\,\|f\|_\infty\,\|g\|_\infty\,\mathfrak{s}_1(\kappa_n(x))$, $\,\mathbf{1}_{R^g}\,\|f\|_\infty\,\|g\|_\infty\,$, respectively. Now, passing to the limit $n\to\infty$ in \eqref{prelim} and using \eqref{conv_inside} and the Dominated Convergence Theorem we end up with \eqref{duality}.    
\end{proof}

\section{Proof of the main result} \label{sec4}

At this stage, Theorem \ref{main_thm} is a rather simple consequence of Theorem \ref{dual}.

\medskip

\noindent\textit{Proof of Theorem \ref{main_thm}.} Since the filtration $\mathbb{F}$ is generated by projections, the first statement in Theorem \ref{main_thm} will follow if we can show
$$
 \ev^{\mathbb{Q}}\Big[\prod_{j=1}^\ell f_j(\widetilde{X}( t_j))\Big] \,=\,\ev^{\pp}\Big[\prod_{j=1}^\ell f_j(X(T-t_j))\Big]
$$
or equivalently 
\eq\label{mainiden}
\ev^{\tilde{\pp}}\bigg[\exp\Big(-2\,\sum_{i=1}^d \frac{\mathfrak{b}_i\,\mathfrak{r}_{i,i}}
{\mathfrak{a}_{i,i}}\,\tilde{L}_i(T)\Big)
\,\frac{p(\tilde{X}(T))}{p(\tilde{X}(0))}\,\prod_{j=0}^\ell f_j(\tilde{X}(t_j))\bigg]
\,=\,\ev^{\pp}\Big[\prod_{j=0}^\ell f_j(X(T-t_j))\Big]
\en
for all $\,\ell\in\nn$, $0=t_0<t_1<\ldots<t_\ell=T\,$ and $\,f_0,f_1,\ldots,f_\ell\in C_c(\rr_+^d)\,$. In particular, let us note that \eqref{mainiden} implies that $\,\mathbb{Q}\,$ is a probability measure: indeed, one only needs to take $\,\ell=1\,$ and sequences of nonnegative functions $\,f_0$, $f_1\,$ which increase to the function $\, f \equiv 1\,$, and to apply the Monotone Convergence Theorem.   

\medskip

To show \eqref{mainiden} we apply \eqref{duality} repeatedly to the left-hand side of \eqref{mainiden}:
\begin{eqnarray*}
&&\int_{\rr_+^d} \mathrm{d}x\,f_0(x) \int_{\rr_+^d} \hat{P}_{t_1}(x,\mathrm{d}x_1)\,f_1(x_1)
\ldots\int_{\rr_+^d} \hat{P}_{t_\ell-t_{\ell-1}}(x_{\ell-1},\mathrm{d}x_\ell)\,f_\ell(x_\ell)\,p(x_\ell) \\
&&=\int_{\rr_+^d} \mathrm{d}x\,f_1(x)\,\ev^x[f_0(X(t_1))] \int_{\rr_+^d} \hat{P}_{t_2-t_1}(x,\mathrm{d}x_2)\,f_2(x_2)
\ldots\int_{\rr_+^d} \hat{P}_{t_\ell-t_{\ell-1}}(x_{\ell-1},\mathrm{d}x_\ell)\,f_l(x_\ell)\,p(x_\ell) \\
&&\vdots \\
&&=\int_{\rr_+^d} \mathrm{d}x\,\ev^x[f_0(X(t_\ell))\,f_1(X(t_\ell-t_1))\ldots f_{\ell-1}(X(t_\ell-t_{\ell-1}))]\,f_\ell(x)\,p(x).   
\end{eqnarray*}
Now, the stationarity of $\,X\,$ under $\,{\bm\nu}\,$ shows that the latter expression coincides with the right-hand side of \eqref{mainiden}. 

\medskip

If the probability density function $\,p(\cdot)\,$ is twice continuously differentiable and strictly positive throughout $\,\rr_+^d\,$, then we can apply  \textsc{It\^o}'s formula to $\,\log p(\tilde{X}(t))\,$ and obtain 
\eq\label{Ito}
\begin{split}
M(t):=\,\exp\Big(-2\sum_{j=1}^d \frac{\mathfrak{b}_j\,\mathfrak{r}_{j,j}}{\mathfrak{a}_{j,j}}\tilde{L}_j(t)\Big)
\,\frac{p(\tilde{X}(t))}{p(\tilde{X}(0))} \,=\, Z(t)\cdot \exp\bigg(\int_0^t \bigg(\frac{\tilde{\mathcal{A}}\,p}{p}\bigg)\big(\tilde{X}(s)\big)\,\mathrm{d}s\bigg)
\\
\cdot \, \exp\Big(-2\sum_{j=1}^d \frac{\mathfrak{b}_j\,\mathfrak{r}_{j,j}}{\mathfrak{a}_{j,j}}\tilde{L}_j(t) 
+ \sum_{j=1}^d \int_0^t \nabla\log p(\tilde{X}(s))\cdot \overline{\mathfrak{r}}^{\,*}_j\,\mathrm{d}\tilde{L}_j(s)\Big).  
\end{split}
\en
Here $\,\tilde{\mathcal{A}}\,$ is the generator of the process $\,B(t) -\mathfrak{b}\,t\,$, $t\ge0\,$, the process $\, Z(\cdot)\,$ is the exponential $\,\widetilde{\mathbb{P}}-$local martingale of (\ref{expmart}), and $\, \overline{\mathfrak{r}}^{\,*}_j= ( \mathfrak{r}_{1,j}^*, \cdots, \mathfrak{r}_{d,j}^*)^\prime$ the $j$-th column of the dual reflection matrix $\,\mathfrak{R}^*= (\mathfrak{r}_{i,j}^*)_{1 \le i, j \le d}\,$.  

\smallskip

Next, we write $\,\mathfrak{A}=U\,\mathfrak{L}\,U^{-1}\,$ where $\,U\,$ is an orthogonal matrix whose columns are orthonormal eigenvectors of $\,\mathfrak{A}\,$ and $\,\mathfrak{L}\,$ is the corresponding diagonal matrix of eigenvalues of $\,\mathfrak{A}\,$. Clearly, $\,x \mapsto p\left(U^{-1} \mathfrak{L}^{1/2} x\right)\,$ is an invariant density of the RBM $\,\mathfrak{L}^{-1/2}U\tilde{X}(\cdot)\,$ with unit diffusion matrix. At this point, the computation of the normal and tangential components of the reflection matrix of $\,\mathfrak{L}^{-1/2}U\tilde{X}(\cdot)\,$ in section 3.2.1 of \cite{IK} together with the proof of Lemma 7.1 in \cite{HW2} show that, as a consequence of the basic adjoint relationship satisfied by $\,x\mapsto p\left(U^{-1} \mathfrak{L}^{1/2} x\right)\,$, one has  
\[
-2\,\frac{\mathfrak{b}_j\,\mathfrak{r}_{j,j}}{\mathfrak{a}_{j,j}}\,p(x)+(\nabla p)(x)\cdot\overline{\mathfrak{r}}_j^*=0 \quad\mathrm{for\;all\;}~x\in\rr_+^d\;~\mathrm{with\;}~x_j=0
\]
and
\[
(\tilde{\mathcal{A}}\,p)(x)=0\quad\mathrm{for\;all\;}~x\in\rr_{++}^d\,.   
\]

\medskip
\noindent

Therefore, $\,M(\cdot)\,$ is equal to the exponential $\,\widetilde{\mathbb{P}}-$local martingale $\,Z(\cdot)\,$ of \eqref{expmart}. We conclude from this reasoning that $\, M(\cdot)\,$ is a positive $\,\widetilde{\mathbb{P}}-$supermartingale; but we have argued already that
$$
\mathbb{E}^{\widetilde{\mathbb{P}}}\, \big[ M (T) \big] \,=\, \mathbb{Q}(\Omega) \,=\, 1 \,=\, M(0)\,,
$$
so in fact $\, M(\cdot)\,$ is a  $\,\widetilde{\mathbb{P}}-$martingale. Because $\, M(\cdot) \equiv Z(\cdot)\,$,  
the decomposition of \eqref{semimart} is now a consequence of   \eqref{expmart} and of the \textsc{Girsanov}  Theorem. \ep

\bigskip\bigskip

 
\bibliographystyle{alpha}

\begin{thebibliography}{50}

\bibitem{BW} \textsc{Bhardwaj, S.} \& \textsc{Williams, R.J.} (2009) Diffusion approximation for a heavily loaded multi-user wireless communication system with cooperation. \textit{Queueing Syst.} \textbf{62}, 345--382.

\bibitem{Bi} \textsc{Billingsley, P.} (1999) \textit{Convergence of Probability Measures}. Second Edition. Wiley Series in Probability and Statistics. J. Wiley \& Sons, New York.


\bibitem{EK} \textsc{Ethier, S.N.} \& \textsc{Kurtz, T.G.} (1986) \textit{Markov Processes: Characterization and Convergence}. Wiley Series in Probability and Statistics. J. Wiley \& Sons, New York.  

\bibitem{HR} \textsc{Harrison, J.M.} \& \textsc{Reiman, M.I.} (1981) Reflected Brownian motion in an orthant. \textit{Ann. Probab.} \textbf{9},  302--308.

\bibitem{HW} \textsc{Harrison, J.M.} \& \textsc{Williams, R.J.} (1987) Brownian models of open queueing networks with homogeneous customer populations. \textit{Stochastics} \textbf{22}, 77--115.

\bibitem{HW2} \textsc{Harrison, J.M.} \& \textsc{Williams, R.J.} (1987) Multidimensional reflected Brownian motions having exponential stationary distributions. \textit{Ann. Probab.} \textbf{15}, 115-137.

\bibitem{HP} \textsc{Haussmann, U. \& Pardoux, E.} (1986) Time reversal of diffusions. {\it Ann. Probab.} {\bf 14}, 1188-1205.

\bibitem{IK} \textsc{Ichiba, T.} \& \textsc{Karatzas, I.} (2010) On collisions of Brownian particles. {\it Ann. Appl. Probab.} \textbf{20}, 951--977.

\bibitem{KPS} \textsc{Karatzas, I.}, \textsc{Pal, S.} \& \textsc{Shkolnikov, M.} (2012) Systems of Brownian particles with asymmetric collisions. Preprint. Available at \textit{arxiv.org/abs/1210.0259}. 

\bibitem{M} \textsc{Meyer, P.A.}  (1994)  Sur une transformation du mouvement brownien due  \`a Jeulin et Yor. {\it Lecture Notes in Mathematics}   {\bf  1583}, 98-101. Springer-Verlag, NY.

\bibitem{R} \textsc{Reiman, M.I.} (1984) Open queueing networks in heavy traffic. \textit{Mathematics of Operations Research} \textbf{9}, 441--458.

\bibitem{RW} \textsc{Reiman, M.I.} \& \textsc{Williams, R.J.} (1988) A boundary property of semimartingale reflecting Brownian motions. \textit{Probab. Theory Relat. Fields} \textbf{77}, 87--97.

\bibitem{RoW}  \textsc{Rogers, L.C.G. \& Williams, D.} (1987) {\it Diffusions, Markov Processes and Martivgales. Volume 2: It\^o Calculus.} Wiley Series in Probability and Statistics. J. Wiley \& Sons, New York. 

\bibitem{TW} \textsc{Taylor, L.M.} \& \textsc{Williams, R.J.} (1993) Existence and uniqueness of semimartingale reflecting Brownian motions in an orthant. \textit{Probab. Theory Relat. Fields} \textbf{96}, 283--317.

\bibitem{Wi3} \textsc{Williams, R.J.} (1987) Reflected Brownian motion with skew symmetric data in a polyhedral domain. \textit{Probab. Theory Rel. Fields} \textbf{75}, 459--485.

\bibitem{Wi} \textsc{Williams, R.J.} (1988) On time-reversal of reflected Brownian motions. \textit{Seminar on Stochastic Processes 1987,} 265--276. {\sl Progress in Probability and  Statistics} \textbf{15}, Birkh\"auser,  Boston.

\bibitem{Wi2} \textsc{Williams, R.J.} (1995) Semimartingale reflecting Brownian motions in the orthant. In \textit{Stochastic Networks} (F.P. Kelly and R.J. Williams, editors). Springer Verlag, New York. 

\bibitem{Wi4} \textsc{Williams, R.J.} (1998) An invariance principle for semimartingale reflecting Brownian motions in an orthant. \textit{Queueing Syst.} \textbf{30}, 5--25.
\end{thebibliography}

\bigskip

\end{document}